\documentclass[a4paper,10pt]{article}
\usepackage[T1]{fontenc}
\usepackage{amsthm,amssymb,amsmath}
\newtheorem{theorem}{Theorem}
\newtheorem{lemma}[theorem]{Lemma}
\newtheorem{corollary}[theorem]{Corollary}

\title{Another approach to non-repetitive colorings of graphs of bounded degree}
\author{Matthieu Rosenfeld\footnote{CNRS, LIS, Aix Marseille Universit\'e, Universit\'e de Toulon, Marseille, France}}
\date{}
\begin{document}
\maketitle
\begin{abstract}
 We propose a new proof technique that aims to be applied to the same problems as the  Lov\'asz Local Lemma or the entropy-compression method.
 We present this approach in the context of non-repetitive colorings and we use it to improve upper-bounds relating different non-repetitive numbers to the maximal degree of a graph.
It seems that there should be other interesting applications to the presented approach.
 
 In terms of upper-bound our approach seems to be  as strong as entropy-compression, but the proofs are more elementary and shorter.
 The application we provide in this paper are upper bounds for graphs of maximal degree at most $\Delta$: a minor improvement on the upper-bound of the non-repetitive number,
 a $4.25\Delta +o(\Delta)$ upper-bound on the weak total non-repetitive number and
 a $ \Delta^2+\frac{3}{2^\frac{1}{3}}\Delta^{\frac{5}{3}}+ o(\Delta^{\frac{5}{3}})$ upper-bound on the total non-repetitive number of graphs.
 This last result implies the same upper-bound for the non-repetitive index of graphs, which improves the best known bound.

\end{abstract}

\section{Introduction}
A sequence $s_1\ldots s_{2n}$ is \emph{a square} if
$s_i=s_{i+n}$  for each $i\in\{1,\ldots,n\}$.
A sequence is \emph{repetitive} if it contains a consecutive subsequence that is a square 
and it is \emph{non-repetitive} (or \emph{square-free}) otherwise. 
For instance, the words \textbf{hotshots}, \textbf{repetitive} and \textbf{alfalfa} are repetitive and
the words \textbf{total} and \textbf{minimize} are square-free.

The work of Thue on non-repetitive words is regarded as the starting point of combinatorics on words
\cite{Thue06,Thue1} (see \cite{BerstelThue} for a translation in modern mathematical English).  He showed that there are infinite sequences over three elements that are square free.
Many generalizations and variations of this notion have been studied.
One such notion  that received a lot of attention is the notion of 
non-repetitive coloring of graphs introduced by Currie and popularized by an article by Alon et Al. \cite{alongraph,curriegraph} 
(see \cite{CZERWINSKI2007453,vSkrabuvlakova2015TheTC} for surveys on this topic).
We say that a coloring (either of the vertices or of the edges) of a graph is \emph{non-repetitive} if the sequence of colors induced by any path is non-repetitive.
The \emph{Thue number} (resp. \emph{Thue index})(also called  \emph{non-repetitive number} and \emph{non-repetitive index}) of a graph, denoted by $\pi(G)$ (resp. $\pi'(G)$) is the smallest number of colors in a non-repetitive coloring of the vertices (resp. the edges) of the graph. 
Alon et Al. showed that $\pi'(G)$ is in $O(\Delta^2)$ where $\Delta$ is the maximum degree of $G$ \cite{alongraph}.  
Different authors provided successive improvements of these bounds both for the Thue number and the Thue index \cite{HARANT2012374, Dujmovic2016, MontassierEntropie}.
Although this is not the topic of this article, non-repetitive colorings have since then been studied in many other context than graphs of bounded degree. For instance, after many intermediate results it was recently showed that planar graphs have bounded non-repetitive number \cite{planargraphs}. 

Most results regarding non-repetitive coloring of graphs of bounded maximal degree are either based on the Lov\'asz Local Lemma or entropy compression and they naturally hold on the stronger setting of
list coloring. A \emph{list assignment} of a graph $G$ is a function that maps any vertex $v$ (and/or any edge) to a set of colors $L(v)$.
A graph $G$ is \emph{non-repetitively $L$ colorable} if there is a way to non-repetitively color it such that the color of each vertex $v$ belongs to its list of colors $L(v)$. A graph is then said to be \emph{non-repetitively $l$-choosable} if it is  non-repetitively $L$ colorable for any list assignment $L$ such that every list is of size at least $l$.
The non-repetitive choice number $\pi_{\operatorname{ch}}(G)$ is the smallest integer $l$ such that $G$ is non-repetitively $l$-choosable. Similarly, the list variant of the non-repetitive index is denoted by $\pi'_{\operatorname{ch}}(G)$.
The best bounds relating these quantities to the maximum degree $\Delta$ of a graph are respectively (see \cite{MontassierEntropie})
\begin{equation}
 \pi'_{\operatorname{ch}}(G)\le\Delta^2+2^\frac{4}{3}\Delta^\frac{5}{3}+O(\Delta^\frac{4}{3})
\end{equation}
and 
\begin{equation}\label{ThueChoiceNumberToImprove}
 \pi_{\operatorname{ch}}(G)\le\left\lceil\Delta^2+\frac{3}{2^\frac{2}{3}}\Delta^\frac{5}{3}+\frac{2^\frac{2}{3}\Delta^\frac{5}{3}}{\Delta^\frac{1}{3}-2^\frac{1}{3}}\right\rceil\,.
\end{equation}

The notion of \emph{Total Thue coloring} was introduced by
Schreyer and  \v{S}krabuvl\'akov\'a \cite{introthuetotalcol}.
A coloring of the edges and the vertices of a graph is a \emph{weak total Thue coloring} if the sequence of consecutive vertex-colors and edge-colors of every path is non repetitive. If moreover the sequence of vertex-colors and the sequence of edge-colors of any path are both non-repetitive then this is a (strong) \emph{total Thue coloring}.
The weak total Thue number $\pi_{\operatorname{T_w}}(G)$ (resp. $\pi_{\operatorname{T}}(G)$) is the minimum number of colors in a weak total Thue coloring  of $G$ (resp. a total Thue coloring of $G$). These two parameters both have there list-coloring counterpart denoted respectively by  $\pi_{\operatorname{T_wch}}(G)$ and $\pi_{\operatorname{Tch}}(G)$.
In their article, Schreyer and  \v{S}krabuvl\'akov\'a showed that
$\pi_{\operatorname{T}}(G)\le 5\Delta^2+o(\Delta^2)$, 
$\pi_{\operatorname{Tch}}(G)\le 17.9856\Delta^2$ and 
$\pi_{\operatorname{T_w}}(G)\le |E(G)|-|V(G)|+5$ \cite{introthuetotalcol}.
We remark that the second bound also relies on an application of the Lov\'asz Local Lemma.

In this article, we propose a different proof technique strongly related to the Lov\'asz Local Lemma and to entropy compression (whose idea is based on the algorithmic proof of the Lov\'asz Local Lemma by Moser and Tardos \cite{10.1145/1667053.1667060})  and we apply this technique to different non-repetitive coloring problems.
This technique provides bounds as good as entropy compression, but is much more elementary. 
The more advanced piece of mathematics required in the proof is summation of geometric series (for comparison, entropy compression arguments usually rely on Analytic combinatorics to compute some variations of Catalan numbers to bound the number of records). 
The main idea of this approach is to show inductively that at every ``step'' of the coloring the number of possible colorings grows exponentially (this resembles the proof of LLL in this regard).
One more benefit of this approach is that it provides exponential lower bounds on the number of solutions. However, we lose the constructive aspect and the algorithmic implications of entropy-compression arguments.
We should mention that Bernshteyn recently introduced the Local Cut Lemma \cite{BERNSHTEYN201795}, a lemma that aims to be applied to the same set of problems as LLL or entropy compression, but is more powerfull.
In some cases it also provides shorter proofs than LLL or entropy-compression, but they are still more technical (and it might be argued that they are longer if one includes the proof of the Local Cut Lemma itself).

As a simple illustration of our technique we first provide a proof that the Thue choice number of any path is at most $4$ (for a proof of this result based on LLL see \cite{doi:10.1002/rsa.20347} and for a proof based on entropy-compression see \cite{10.1002/rsa.20411}).
Then we apply our technique to the Thue choice number in Theorem \ref{improvingthuenumber} and improve the lowest degree term of the bound given by \eqref{ThueChoiceNumberToImprove}. The  improvement is minor and could certainly be achieved with a more careful analysis in \cite{MontassierEntropie}, however the proof given in  \cite{MontassierEntropie} is much more technical than our self-contained proof of one and a half page.

We then apply our method to weak total Thue coloring and total Thue coloring. Our first result given in  Theorem \ref{weaktotalthuecoloringthm} asserts that $\pi_{\operatorname{T_wch}}(G)\le6\Delta$ for any graph of maximum degree $\Delta$. Prior to our article, there does not seems to be any other known result than the $\pi_{T_w}(G)\le |E(G)|-|V(G)|+5$ from \cite{introthuetotalcol}. These two results do not directly compare to each others, 
but our result is stronger as long as the average degree is at most $14$ and in many other reasonable graph classes (also our result is stronger since for any graph $G$,  $\pi_{\operatorname{T_w}}(G)\le\pi_{\operatorname{T_wch}}(G)$).
Remark that our result also implies a linear bound on the number of vertices instead of a quadratic bound.
Then, regarding the total Thue choice number, we show in Theorem \ref{weaktotalthuecoloringthm} that $\pi_{T}\le\pi_{\operatorname{Tch}}(G)\le\Delta^2+O(\Delta^\frac{5}{3})$ which improves considerably the two bounds from \cite{introthuetotalcol} previously mentioned.
We conclude with a discussion regarding applications of this method to other problems.

We assume that our reader is familiar with basic graph notations.
Our results are all about some form of list-coloring problem and we allow ourselves to write ``coloring of $G$'' instead of ``coloring of $G$ respecting the list assignment $L$'' whenever $L$ is obvious from the context.

\section{Non-repetitive colorings of paths and the proof technique} 
In this section, we first give an illustrative example of our proof technique. 
Then we informally sketch a more general description of the proof technique.

\subsection{Non-repetitive colorings of paths}
This section is devoted to the proof of Theorem \ref{thmpath}.
This result is not new \cite{doi:10.1002/rsa.20347,10.1002/rsa.20411}, but it is simple application of our approach.
\begin{theorem}\label{thmpath}
For every path $P$, $\pi_{\operatorname{ch}}(P)=4$.
\end{theorem}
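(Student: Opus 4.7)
The plan is to prove the upper bound $\pi_{\operatorname{ch}}(P)\le 4$ for every path $P$; the matching lower bound (that $\ge 4$ holds for some path) is a known separate result. Fix $P=v_1\cdots v_n$ and a list assignment $L$ with $|L(v_i)|\ge 4$ for every $i$. Let $a_i$ denote the number of non-repetitive $L$-colorings of the prefix $v_1\cdots v_i$, with the convention $a_0=1$. The whole argument rests on a single inductive claim: $a_i\ge 2\,a_{i-1}$ for every $i\ge 1$. Granting this claim gives $a_n\ge 2^n\ge 1$, so a non-repetitive $L$-coloring of $P$ exists.

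For the inductive step I would double count the set of ``bad pairs'' $(c,x)$, where $c$ is a non-repetitive $L$-coloring of $v_1\cdots v_i$ and $x\in L(v_{i+1})$ is a color whose addition to $c$ creates a square of some length $2k$ ending at position $i+1$. Unpacking the square condition, such a bad pair is characterised by $x=c_{i-k+1}$ together with the equalities $c_{i-2k+2+j}=c_{i-k+2+j}$ for $j=0,\ldots,k-2$. These equalities force the last $k-1$ entries of $c$ to copy the block $c_{i-2k+2},\ldots,c_{i-k}$, so $c$ is entirely determined by its prefix $c_1,\ldots,c_{i-k+1}$, and the bad color $x=c_{i-k+1}$ is determined as well. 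The number of bad pairs creating a length-$2k$ square is therefore at most $a_{i-k+1}$, and summing over $k\ge 1$ yields
\[
a_{i+1}\ \ge\ 4\,a_i-\sum_{k\ge 1}a_{i-k+1}\ \ge\ 4\,a_i-\sum_{j=0}^{i}a_j.
\]

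A geometric-series estimate then closes the induction. Assuming $a_j\ge 2\,a_{j-1}$ for all $1\le j\le i$, one has $a_j\le 2^{-(i-j)}a_i$, so $\sum_{j=0}^{i}a_j\le a_i\sum_{k\ge 0}2^{-k}=2\,a_i$. Substituting gives $a_{i+1}\ge 4\,a_i-2\,a_i=2\,a_i$, and the base case $a_1\ge 4\ge 2=2\,a_0$ is immediate.

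The main obstacle is the combinatorial encoding of bad pairs: one must check that each bad pair is charged to a non-repetitive prefix of length $i-k+1$ without over- or under-counting, so that summing over $k$ loses nothing essential. Once this counting lemma is in place, everything else is a short geometric sum. This simple argument is also the template that the rest of the paper plans to reuse: ``exponential growth of the number of partial solutions, proved by induction via a counting bound on bad extensions'' is precisely the pattern that will be lifted to the weak total Thue, total Thue, and related problems treated later.
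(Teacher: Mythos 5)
Your proof is correct and takes essentially the same approach as the paper: you prove the same inductive inequality $a_{i+1}\ge 2a_i$ by counting bad extensions, bounding those involving a square of half-length $k$ by the number of non-repetitive colorings of a prefix shorter by $k$, and closing with the same geometric series. The ``bad pair'' repackaging is the same object as the paper's set $F$ of bad one-step extensions, just written differently, and both proofs leave the lower bound $\pi_{\operatorname{ch}}\ge 4$ to the literature.
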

This Theorem is a simple consequence of the following lemma.
We order the vertices of any path ``from left to right'' such that each vertex is adjacent to the vertex to its right and to its left (the leftmost and rightmost vertices are the ends of the path).
\begin{lemma}\label{lemmpath}
Let $L$ be a list assignment of a path $P$ such that all lists are of size  $4$.
Let $C_n$ be the number of non-repetitive colorings of the $n$ leftmost vertices of $P$ that respects $L$.
 Then for any integer $n< |P|$, we have
 $$C_{n+1}\ge2C_n$$
\end{lemma}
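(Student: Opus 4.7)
The plan is to proceed by strong induction on $n$. Since $|L(v_{n+1})|=4$, each non-repetitive coloring of $v_1,\ldots,v_n$ admits exactly four candidate extensions to $v_1,\ldots,v_{n+1}$, giving $4C_n$ candidate colorings. Letting $B_n$ count the candidates that create a square, I would write $C_{n+1}=4C_n-B_n$ and reduce the lemma to the inequality $B_n\le 2C_n$.

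To bound $B_n$, I would classify each bad candidate by the length $2k$ of a square it contains ending at the new vertex $v_{n+1}$, with $1\le k\le (n+1)/2$. Having such a square forces the new color to be exactly $c_{n+1-k}$ and imposes the equations $c_{n+2-2k+i}=c_{n+2-k+i}$ for $i=0,\ldots,k-2$ on the existing coloring $c$. Thus, for each $k$, the number of bad candidates admitting a length-$2k$ witness is at most the number of non-repetitive colorings $c$ of $v_1,\ldots,v_n$ satisfying these $k-1$ equalities.

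The key step is an injection from the constrained colorings into $\mathcal{C}_{n+1-k}$: the $k-1$ equations express $c_{n+2-k},\ldots,c_n$ as copies of $c_{n+2-2k},\ldots,c_{n-k}$, so the restriction map $c\mapsto(c_1,\ldots,c_{n+1-k})$ loses no information and lands in the set of non-repetitive colorings of the first $n+1-k$ vertices. Hence the number of bad candidates with a length-$2k$ witness is at most $C_{n+1-k}$, and a union bound over $k$ gives $B_n\le\sum_{k\ge 1}C_{n+1-k}$.

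Finally, iterating the induction hypothesis yields $C_{n+1-k}\le 2^{-(k-1)}C_n$, and the geometric sum $\sum_{k\ge 1}2^{-(k-1)}=2$ closes the argument with $B_n\le 2C_n$. The base case $n=0$ is trivial since $B_0=0$. The main subtlety lies in indexing the partial-match equations correctly and verifying that the restriction map truly is injective on the constrained colorings; once that bookkeeping is pinned down, the geometric summation turns the local union bound into the desired doubling at no extra cost, which is precisely the flavor of argument the paper advertises as its new technique.
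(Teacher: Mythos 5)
Your proof is correct and follows essentially the same approach as the paper: set up $C_{n+1}=4C_n-(\text{bad})$, partition the bad extensions by square length $2k$, bound each class by $C_{n+1-k}$ via the fact that the second half of the square is determined by the first, and close with the induction hypothesis and the geometric series $\sum_{k\ge1}2^{-(k-1)}=2$. The only difference is that you spell out the partial-match equations and the injectivity of the restriction map in more detail, where the paper states these facts more tersely.
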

\begin{proof}
 Let us proceed by induction on $n$. 
 Let $n$ be an integer such that the Lemma holds for any integer smaller than $n$ and let us show that $C_{n+1}\ge2C_n$.
 Let $F$ be the set of colorings of the $n+1$ leftmost vertices that respect $L$ and are repetitive, but induce a non-repetitive coloring of the $n$ leftmost vertices. 
 Then
 \begin{equation}\label{coutingforpath}
  C_{n+1}=4C_n-|F|
 \end{equation}
Let us now bound the size of $F$. 
For every $i$, let $F_i$ be the colorings from $F$ that contain a square of length $2i$. Then clearly $|F|\le \sum_{i\ge1}|F_i|$.
For any coloring $c$ of $F_i$, the last $i$ colors of $c$ can be recovered from the previous $i$ colors (because of the repetition) and the first $n+1-i$ colors induce a non-repetitive coloring. We deduce that for all $i$,
$|F_i|\le C_{n+1-i}$. 
Now by induction hypothesis this implies that $|F_i|\le 2^{1-i}C_{n}$.
We finally get
\begin{equation}\label{conclusionlemmapath}
 C_{n+1}\ge4C_n-\sum_{i\ge1} 2^{1-i}C_{n}\ge 2C_n
\end{equation}
 which concludes our proof.
\end{proof}

The consequences of Lemma \ref{lemmpath} are in fact stronger than Theorem \ref{thmpath}, since it implies that there are at least $2^n$ coloring for any list assignment. However, the statement of this Lemma is even stronger and it is crucial in the proof that the number of allowed colorings is multiplied by at least $2$ every time that we color one more vertex.

\subsection{The proof technique}
Let us now give a more general informal sketch of our proof technique. 
Suppose you want to show that any graph from some class $\mathcal{C}$ of graphs admits a \emph{valid} coloring with at most $\gamma$ colors (eg. Theorem \ref{thmpath}).
Suppose moreover that both the class and the valid colorings are hereditary in the sense that the graph induced by a partial coloring also belongs to $\mathcal{C}$ and that every subcoloring of a valid coloring is also valid.
For any graph $G$, we let $c(G)$ be the set of valid colorings of $G$.

We try to show the stronger result that there exists a constant $\alpha$ such that for any graph $G\in\mathcal{C}$ and element $e$ of $G$ to be colored (this might be an edge or a vertex) we have the inequality
$|c(G)|\ge\alpha\times |c(G\setminus\{e\})|$ (eg, Lemma \ref{lemmpath} with $\gamma=4$ and $\alpha=2$).
We proceed by induction on the size of $G$ to show this result.

Let $F$ be the set of colorings of $G$ that are not valid but that induce
a valid coloring of $G\setminus\{e\}$. Then by definition we have the following equality $$|c(G)|= \gamma |c(G\setminus\{e\}| - F\,.$$ 
Now suppose we can find coefficients $(a_i)_{i\ge1}$ such that \begin{equation}\label{eqboundF}
|F|\le \sum_{i\ge1}a_i \frac{ |c(G\setminus\{e\}|}{\alpha^{i-1}}
\end{equation}
and
 \begin{equation}\label{eqgammaalpha}
\sum_{i\ge1}\frac{a_i}{\alpha^{i-1}}\le(\gamma-\alpha)\,.
 \end{equation}
We deduce
$$|c(G)|\ge |c(G\setminus\{e\})| \alpha$$ which conclude our proof.

The technical part is to show the upper-bound on $F$ with the right coefficients $(a_i)_{i\ge1}$. 
This is done by finding injections from $F$ to valid colorings of subgraphs of $G\setminus\{e\}$. More precisely, we find a way to express $F$ as the union of colorings $(F_i)_{i\ge1}$, 
such that for all $i$ there is an injection from $F_i$ to the union of the colorings of $a_i$ different subgraphs of $G\setminus\{e\}$ of cardinality $|G|-i$. Then we can use our induction hypothesis to upper-bound the number of valid colorings of any subgraph obtained by removing $i$ elements of $G\setminus\{e\}$ by $\frac{ |c(G\setminus\{e\}|}{\alpha^{i}}$ which leads to equation \eqref{eqboundF}.

It might seem that we have to guess the right values of $\gamma$ and $\alpha$ in the Lemma statement, but it is not the case. 
Indeed, one can first find the coefficients $(a_i)_{i\ge1}$ with variables $\gamma$ and $\alpha$ and then take the best values $\gamma$ and $\alpha$ such that equation \eqref{eqgammaalpha} is satisfied. This is done by choosing $\gamma= \min\limits_{\alpha>\frac{1}{r}}\left(\alpha+\sum\limits_{i\ge1}\frac{a_i}{\alpha^{i-1}}\right)$ where $r$ is the radius of convergence of $\sum_{i\ge1}a_i x^i$. \footnote{In pratice, we take upper-bounds  of the minimum obtained from manipulations of Taylor polynomials in order to give nicer expressions. These details only matter when writting the proof, but not when reading it.} 
For instance, in Lemma \ref{lemmpath}, we need $\gamma -\frac{\alpha}{\alpha-1}\ge \alpha$ and $\alpha>1$. The minimum of $\frac{\alpha}{\alpha-1}+\alpha$ is $4$ and is reached for $\alpha=2$ these are respectively the best values to take for $\gamma$ and $\alpha$.

The technique is strongly related to the entropy compression technique.
In fact, in the particular context of colorings of graphs of bounded degree, it is equivalent to the approach of 
\cite[Theorem 12]{MontassierEntropie}. 
Using our technique one can in fact provide a simpler proof of their Theorem 12 (our $a_i$ are their $C_i$ and our $\alpha$ is their $x^{-1}$), but it does not seem to be worth the trouble of  introducing all the necessary formalism only to provide an alternative proof of he exact same result. However, even if we can simplify the proof and match the bound of their Theorem, we cannot easily improve the bound.

Remark that bounding $|F|$ in the proof of Lemma \ref{lemmpath} is simplified by the linear structure of the path, since we know that the vertices that contribute to a square are always the last vertices added. 
In fact, in the setting of words this proof is almost identical to the power series method for pattern avoidance \cite{BELL20071295,BLANCHETSADRI201317,doublepat,rampersadpowerseries}.

\section{Non-repetitive colorings} 
In this section we apply our method to non-repetitive colorings of graphs of bounded maximum degree. 
\begin{theorem}\label{improvingthuenumber}
 For every graph $G$ with maximum degree $\Delta\ge1$, we have
 $$\pi_{\operatorname{ch}}(G)\le\Delta^2+\frac{3}{2^{\frac{2}{3}}}\Delta^{\frac{5}{3}}+2^{\frac{2}{3}}\Delta^{\frac{4}{3}}\,.$$
\end{theorem}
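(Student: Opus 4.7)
The plan is to prove, by induction on $|V(H)|$, the strengthened statement: for every graph $H$ with maximum degree at most $\Delta$, every list assignment $L$ on $H$ whose lists all have size $\gamma := \Delta^2 + \frac{3}{2^{2/3}}\Delta^{5/3} + 2^{2/3}\Delta^{4/3}$, and every vertex $v$ of $H$,
$$|c(H)| \ge \alpha \cdot |c(H \setminus \{v\})|,$$
with $\alpha := \Delta^2 + 2^{1/3}\Delta^{5/3}$. Here $c(H)$ denotes the set of $L$-respecting non-repetitive colorings of $H$. Iterating along a vertex ordering of the given graph $G$ yields $|c(G)| \ge \alpha^{|V(G)|} > 0$, which proves Theorem \ref{improvingthuenumber}.

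For the inductive step I fix any vertex $v$ of $H$ and decompose
$$|c(H)| = \gamma \cdot |c(H \setminus \{v\})| - |F|,$$
where $F$ is the set of $L$-colorings of $H$ that are repetitive but whose restriction to $H \setminus \{v\}$ is non-repetitive. Every $c \in F$ must contain a square whose underlying path passes through $v$. I stratify $F = \bigcup_{i \ge 1} F_i$ according to the length $2i$ of such a square.

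The crux is bounding $|F_i|$. For each simple path $P = u_1 \cdots u_{2i}$ through $v$, orient $P$ so that $v$ lies in its first half $P_v := \{u_1, \ldots, u_i\}$. The square condition $c(u_t) = c(u_{t+i})$ forces the colors on $P_v$ from those on the opposite half, so the restriction $c \mapsto c|_{H \setminus P_v}$ is injective on $\{c \in F_i : P \text{ is a square of } c\}$, and its image consists of non-repetitive $L$-colorings of the induced subgraph $H \setminus P_v$ (any path in $H \setminus P_v$ is also a path in $H \setminus \{v\}$, on which $c$ is non-repetitive). Applying the inductive hypothesis $i - 1$ times, by removing the vertices of $P_v \setminus \{v\}$ one at a time from $H \setminus \{v\}$, yields $|c(H \setminus P_v)| \le |c(H \setminus \{v\})|/\alpha^{i-1}$. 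A direct enumeration of simple paths of $2i$ vertices through $v$ shows that each possible position $j \in \{1, \ldots, i\}$ of $v$ in the first half contributes at most $\Delta(\Delta-1)^{2i-2}$ paths, giving at most $i\Delta(\Delta-1)^{2i-2}$ admissible (unordered) paths $P$ in total. Therefore
$$|F_i| \le i\Delta(\Delta-1)^{2i-2} \cdot \frac{|c(H \setminus \{v\})|}{\alpha^{i-1}}.$$

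Summing the resulting series gives $|F| \le \frac{\Delta\alpha^2}{(\alpha - (\Delta-1)^2)^2} \cdot |c(H \setminus \{v\})|$, so the induction closes provided $\gamma \ge \alpha + \Delta\alpha^2/(\alpha - (\Delta-1)^2)^2$. Writing $\alpha = \Delta^2 + c\Delta^{5/3}$ and expanding in powers of $\Delta$, the right-hand side equals $\Delta^2 + (c + c^{-2})\Delta^{5/3} + 2c^{-1}\Delta^{4/3} + (1 - 4c^{-3})\Delta + O(\Delta^{2/3})$; the choice $c = 2^{1/3}$ minimizes $c + c^{-2} = 3 \cdot 2^{-2/3}$, matches the two leading coefficients of $\gamma$ exactly, and makes the $\Delta$ coefficient equal to $-1$. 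The main technical obstacle is the careful bookkeeping of these lower-order terms: one must verify that the resulting $-\Delta$ correction, arising because $\alpha - (\Delta-1)^2 = 2^{1/3}\Delta^{5/3} + 2\Delta - 1$ rather than just $2^{1/3}\Delta^{5/3}$, provides enough slack for the inequality to hold exactly as stated, including at the small-$\Delta$ boundary where the asymptotic expansion is not yet accurate.
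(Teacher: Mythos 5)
Your proposal follows the same strategy as the paper's proof: induction on $|V(H)|$, decomposing the bad colorings $F$ into $F_i$ by the length $2i$ of the offending square, bounding the number of paths of $2i$ vertices through $v$ by $i\Delta(\Delta-1)^{2i-2}$, recovering the half $P_v$ of the path from the opposite half, and summing the resulting series $\sum i x^{i-1}$. The injection argument and the inductive application to $H\setminus P_v$ are exactly the paper's.

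The difference is your choice of parameterization, and it is precisely where your argument is not finished. You take $\alpha = \Delta^2 + 2^{1/3}\Delta^{5/3}$ and $\gamma$ equal to the theorem's displayed bound, and you observe correctly that the inequality you must close, $\gamma \ge \alpha + \Delta\alpha^2/(\alpha-(\Delta-1)^2)^2$, has a leading expansion $\Delta^2 + \frac{3}{2^{2/3}}\Delta^{5/3} + 2^{2/3}\Delta^{4/3} - \Delta + O(\Delta^{2/3})$. But you explicitly stop there, saying it remains to verify that the $-\Delta$ slack absorbs the lower-order terms for all $\Delta\ge 1$ (or $\ge 2$ after disposing of the trivial $\Delta=1$ case). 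That verification is the whole technical content of the step; as written, the proof is not complete. The difficulty arises because $\alpha - (\Delta-1)^2 = 2^{1/3}\Delta^{5/3} + 2\Delta - 1$ does not divide the path count $\Delta(\Delta-1)^{2i-2}$ cleanly, forcing you into an asymptotic expansion that is not a proof.

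The paper sidesteps this by choosing $\alpha = \Delta(\Delta-1)\left(1 + 2^{1/3}\Delta^{-1/3}\right)$, i.e., explicitly carrying the factor $\Delta(\Delta-1)$ so that
\[
\frac{\Delta(\Delta-1)^{2i-2}}{\alpha^{i-1}} = \frac{(\Delta-1)^{i-1}}{\Delta^{i-2}\left(1+2^{1/3}\Delta^{-1/3}\right)^{i-1}} \le \frac{\Delta-1}{\left(1+2^{1/3}\Delta^{-1/3}\right)^{i-1}} \quad (i\ge 2),
\]
and handling $i=1$ separately. The geometric series then closes in two lines with no asymptotic expansion and no boundary case, and the resulting list size is $\Delta(\Delta-1)(1 + \gamma'\Delta^{-1/3})+1 = \Delta^2 + \frac{3}{2^{2/3}}\Delta^{5/3} + 2^{2/3}\Delta^{4/3} - \frac{3}{2^{2/3}}\Delta^{2/3} - 2^{2/3}\Delta^{1/3}$, which is at most the theorem's stated bound. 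If you want to keep your parameterization you must actually prove the inequality for all relevant $\Delta$ (e.g.\ by a monotonicity argument plus a finite check); alternatively, switch to the factored form of $\alpha$, which is designed so that the path count cancels and the bookkeeping disappears.
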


Let us instead show a stronger Lemma.
For any graph $G$ and any list assignment $L$ of $G$, the set $C_L(G)$ is the set of non-repetitive colorings of $G$ respecting the list assignment $L$.
\begin{lemma}
Let $\Delta\ge 2$ be an integer and $\gamma=\frac{3}{2^{\frac{2}{3}}}+2^{\frac{2}{3}}\Delta^{-\frac{1}{3}}+\Delta^{-\frac{2}{3}}$.
Let $G$ be a graph of maximal degree at most $\Delta$ and $L$ be a list assignment of $G$.
Suppose each list is of size at least $\Delta(\Delta-1)(1+ \gamma\Delta^{-\frac{1}{3}})+1$
then for any vertex $v$ of $G$ we have
 $$|C_L(G)|\ge \Delta(\Delta-1)(1+2^{\frac{1}{3}}\Delta^{-\frac{1}{3}}) |C_L(G\setminus\{v\})|\,.$$
\end{lemma}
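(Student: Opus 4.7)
The plan is to prove the lemma by induction on $|G|$, in the spirit of the earlier paths example. Set $\alpha = \Delta(\Delta-1)(1+2^{1/3}\Delta^{-1/3})$ and $\ell = |L(v)|$. Every non-repetitive coloring of $G$ restricts to a non-repetitive coloring of $G\setminus\{v\}$, and conversely each such restriction extends in exactly $\ell$ ways to a coloring of $G$ respecting $L$, giving the identity
\[
|C_L(G)| = \ell\cdot |C_L(G\setminus\{v\})| - |F|,
\]
where $F$ collects those extensions that are themselves repetitive. Any new square must pass through $v$, so it suffices to prove $|F|\le(\ell-\alpha)|C_L(G\setminus\{v\})|$.

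To bound $|F|$, for each $i\ge 1$ I let $F_i\subseteq F$ consist of the colorings that admit a square of length $2i$ through $v$, so that $|F|\le\sum_{i\ge 1}|F_i|$. Such a square is witnessed by a simple path $P=p_1\ldots p_{2i}$ with $v=p_j$ for some $j$, subject to $c(p_k)=c(p_{k+i})$ for $k\in\{1,\ldots,i\}$; reversing $P$ if necessary we may assume $j\le i$. These constraints determine the colors of the second half $\{p_{i+1},\ldots,p_{2i}\}$ from those of the first half, and in particular $c(v)=c(p_{j+i})$. Setting $G''=G\setminus(\{v\}\cup\{p_{i+1},\ldots,p_{2i}\})$, the map $c\mapsto(c|_{G''},c(v))$ is injective on the colorings of $F_i$ realising a fixed path $P$, and its image lies in $C_L(G'')\times L(v)$, because $c|_{G''}$ is a restriction of the non-repetitive coloring $c|_{G\setminus\{v\}}$.

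A standard non-backtracking walk count gives at most $\Delta(\Delta-1)^{2i-2}$ paths for each of the $i$ possible positions of $v$ in the first half, hence at most $i\Delta(\Delta-1)^{2i-2}$ admissible paths per $F_i$. Because $G''$ has exactly $i$ fewer vertices than $G\setminus\{v\}$, iterating the induction hypothesis $i$ times gives $|C_L(G'')|\le|C_L(G\setminus\{v\})|/\alpha^{i}$. Combining the injection, the path count and this inductive bound,
\[
|F|\le\ell\cdot|C_L(G\setminus\{v\})|\cdot\sum_{i\ge 1}\frac{i\Delta(\Delta-1)^{2i-2}}{\alpha^i}.
\]

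What remains is purely algebraic. The ratio $(\Delta-1)^2/\alpha = (\Delta-1)/[\Delta(1+2^{1/3}\Delta^{-1/3})]$ is strictly less than $1$, so the series is geometric-type and evaluates in closed form via $\sum_{i\ge 1}ir^{i-1}=1/(1-r)^2$. Asymptotically, $\ell$ times this sum is of order $\Delta^{5/3}/2^{2/3}$, while $\ell-\alpha\sim(\gamma-2^{1/3})\Delta^{5/3}$; the identity $3/2^{2/3}=2^{1/3}+2^{-2/3}$ shows that the leading terms match exactly with $\gamma=3/2^{2/3}$. I expect the main obstacle to be the non-asymptotic bookkeeping: the correction terms $2^{2/3}\Delta^{-1/3}+\Delta^{-2/3}$ in $\gamma$, together with the $+1$ in the list-size hypothesis, must be calibrated precisely to absorb the lower-order discrepancies between $\Delta$ and $\Delta-1$ arising in the path count and in the geometric-series formula, so that $\ell\cdot\sum\le\ell-\alpha$ holds for every $\Delta\ge 2$ rather than only in the limit.
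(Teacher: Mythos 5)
Your induction structure, the decomposition of $F$ into $F_i$, and the non-backtracking path count $i\Delta(\Delta-1)^{2i-2}$ all match the paper's argument. The problem is the injection you build for a fixed path $P$, and it is not merely a bookkeeping issue; it is too lossy to close the induction with the stated $\gamma$.

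You remove $\{v\}\cup\{p_{i+1},\ldots,p_{2i}\}$ (that is $i+1$ vertices), and because $c(v)=c(p_{j+i})$ then lies in the deleted set, you must record $c(v)$ separately, paying a factor of $|L(v)|=\ell$. The paper instead removes $p'$, the half of $p$ that \emph{contains} $v$ ($i$ vertices in total). The other half remains in $G\setminus p'$, and the square condition recovers the colors of $p'$ from it, so no extra factor is needed. Per fixed path, your bound is $|C_L(G\setminus\{v\})|\cdot\ell/\alpha^{i}$ while the paper's is $|C_L(G\setminus\{v\})|/\alpha^{i-1}$; the ratio is $\ell/\alpha>1$, i.e.\ your estimate of $|F_i|$ is systematically worse by a factor $\approx 1+2^{-2/3}\Delta^{-1/3}$.

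This excess is fatal because the paper's inequality is designed to be tight. Concretely, at $\Delta=2$ one has $\alpha=4$, $\ell_0=\Delta(\Delta-1)(1+\gamma\Delta^{-1/3})+1=9$, and the paper's bound gives $|F|\le 5\,|C_L(G\setminus\{v\})|$, so $|C_L(G)|\ge 9-5=4=\alpha$ with equality. Your bound gives $S=\sum_{i\ge1}\frac{i\Delta(\Delta-1)^{2i-2}}{\alpha^i}=\frac{8}{9}$, hence $|F|\le \ell S\,|C_L(G\setminus\{v\})|=8\,|C_L(G\setminus\{v\})|$ and only $|C_L(G)|\ge|C_L(G\setminus\{v\})|$, far short of $4|C_L(G\setminus\{v\})|$. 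The gap persists for all $\Delta$: expanding in $d=\Delta^{-1/3}$, your requirement $\ell\ge\alpha/(1-S)$ needs the coefficient of $d^2$ inside the bracket of $\Delta(\Delta-1)(1+\cdots)$ to be at least $\tfrac{5}{2}2^{-1/3}$, whereas the hypothesis $\gamma$ only supplies $2^{2/3}=2\cdot 2^{-1/3}$. The fix is exactly the paper's choice: delete the half of the square that contains $v$ and observe that the surviving half determines the rest, eliminating both the extra deleted vertex and the extra factor of $\ell$.
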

\begin{proof}
 Let us show this by induction on the number of vertices of $G$.
 This is clearly true if $|G|=1$ since the empty graph has exactly one coloring.
 Let $n$ be an integer such that the Lemma holds for any graph with less than $n$ vertices.
 
 Let $G$ be a graph over $n$ vertices of maximal degree at most $\Delta$ and
 $L$ be a list assignment of $G$ such that each list is of size at least $\Delta(\Delta-1)(1+ \gamma\Delta^{-\frac{1}{3}})+1$.
 Let $v$ be any vertices of $G$.
 
 Let $F$ be the set of colorings of $G$  respecting $L$ that are repetitive and
 that induce a non-repetitive coloring of $G\setminus\{v\}$. We then have 
 \begin{equation}\label{maineqFnonrep}
  |C_L(G)|= (\Delta(\Delta-1)(1+ \gamma\Delta^{-\frac{1}{3}})+1)|C_L(G\setminus\{v\})|- |F|
 \end{equation}
We need to upper-bound the size of $F$.
Let $F_i$ be the set of colorings from $F$ that contain a path of length $2i$ that is a square.
Clearly $|F|\le\sum_{i\ge1} |F_i|$.
Thus for any coloring $c$ from $F_i$ there is path $p$ of length $2i$ such that 
\begin{itemize}
 \item $p$ induces a square in $c$,
 \item $p$ contains $v$ and we can call $p'$ the half of $p$ that contains $v$,
 \item the coloring induced over $G\setminus p'$ is non-repetitive,
 \item $p$ and the coloring induced over $G\setminus p'$ uniquely determines $c$ 
 (since the second half of the square is identical to the first half).
\end{itemize}
Thus, for any fixed $p$ and $p'$, the number of such colorings from $F_i$ is at most
$|C_L(G\setminus p')|$, but since $p'$ contains $v$ and $i-1$ other vertices our induction hypothesis implies that this quantity is bounded by
$$|C_L(G\setminus p')|\le \frac{|C_L(G\setminus \{v\})|}{(\Delta(\Delta-1)(1+2^{\frac{1}{3}}\Delta^{-\frac{1}{3}}))^{i-1}}\,.$$
Moreover, there are at most $i\Delta(\Delta-1)^{2i-2}$ paths of length $2i$ going through $v$.
To see that, remark that $v$ splits such a path in two halves, so one can choose the length of the shortest half between $0$ and $i-1$ and build the path by choosing all the vertices of the short half and then all the vertices of the long half in a DFS manner (there are $\Delta$ choices for the first vertex and $\Delta-1$ for each other vertex).
We deduce
$$|F_i|\le i\Delta(\Delta-1)^{2i-2} \frac{|C_L(G\setminus \{v\})|}{(\Delta(\Delta-1)(1+2^{\frac{1}{3}}\Delta^{-\frac{1}{3}}))^{i-1}}$$
If $i\ge2$ it implies, $$|F_i|\le i(\Delta-1) \frac{|C_L(G\setminus \{v\})|}{(1+2^{\frac{1}{3}}\Delta^{-\frac{1}{3}})^{i-1}}$$ 
and for $i=1$ we have 
$$|F_i|\le\Delta|C_L(G\setminus \{v\})| = (\Delta-1)|C_L(G\setminus \{v\})| +|C_L(G\setminus \{v\})|$$
Thus we can finally upper-bound $|F|$.
\begin{align*}
 |F|&\le|C_L(G\setminus \{v\})|\left( 1+(\Delta-1)\sum_{i\ge1}  \frac{i}{(1+2^{\frac{1}{3}}\Delta^{-\frac{1}{3}})^{i-1}}\right)\\
 |F|&\le|C_L(G\setminus \{v\})|\left(1+(\Delta-1)\frac{(1+2^{\frac{1}{3}}\Delta^{-\frac{1}{3}})^2}{((1+2^{\frac{1}{3}}\Delta^{-\frac{1}{3}})-1)^2}\right)\\
 |F|&\le|C_L(G\setminus \{v\})|\left(1+(\Delta-1)\Delta^{\frac{2}{3}}\left(2^{-\frac{1}{3}}+\Delta^{-\frac{1}{3}}\right)^2\right)
\end{align*}
Together with equation \eqref{maineqFnonrep}, it implies
\begin{align*}
|C_L(G)|&\ge|C_L(G\setminus\{v\})|\left( \Delta(\Delta-1)(1+\gamma\Delta^{-\frac{1}{3}})-(\Delta-1)\Delta^{\frac{2}{3}}\left(2^{-\frac{1}{3}}+\Delta^{-\frac{1}{3}}\right)^2\right)\\
|C_L(G)|&\ge |C_L(G\setminus\{v\})|\Delta(\Delta-1)\left(1+ \Delta^{-\frac{1}{3}}\left(\gamma-\left(2^{-\frac{1}{3}}+\Delta^{-\frac{1}{3}}\right)^2\right)\right)\\
|C_L(G)|&\ge |C_L(G\setminus\{v\})|\Delta(\Delta-1)\left(1+ \Delta^{-\frac{1}{3}}\left(\gamma-2^{-\frac{2}{3}}-2^{\frac{2}{3}}\Delta^{-\frac{1}{3}}-\Delta^{-\frac{2}{3}}\right)\right)
\end{align*}
Substituting $\gamma=\frac{3}{2^{\frac{2}{3}}}+2^{\frac{2}{3}}\Delta^{-\frac{1}{3}}+\Delta^{-\frac{2}{3}}$, we finally get
$$|C_L(G)|\ge |C_L(G\setminus\{v\})|\Delta(\Delta-1)\left(1+ \Delta^{-\frac{1}{3}}2^{\frac{1}{3}}\right)$$
which concludes this proof.
\end{proof}

Remark that the bound given by this Lemma is in fact
\begin{align*}
 \pi_{\operatorname{ch}}&\le\left\lceil\Delta(\Delta-1)\left(1+ \frac{3}{2^{\frac{2}{3}}}\Delta^{-\frac{1}{3}}+2^{\frac{2}{3}}\Delta^{-\frac{2}{3}}+\Delta^{-1}\right)+1\right\rceil\\
 \pi_{\operatorname{ch}}&\le\left\lceil\Delta^2+\frac{3}{2^{\frac{2}{3}}}\Delta^{\frac{5}{3}}+2^{\frac{2}{3}}\Delta^{\frac{4}{3}}-\frac{3}{2^{\frac{2}{3}}}\Delta^{\frac{2}{3}}-2^{\frac{2}{3}}\Delta^{\frac{1}{3}}\right\rceil
\end{align*}
 which is slightly stronger than the bound given in Theorem \ref{improvingthuenumber}.
One can also compare the bound from  Theorem \ref{improvingthuenumber} to the result from \cite{MontassierEntropie} (already mentioned in equation \eqref{ThueChoiceNumberToImprove}).
As $\Delta$ goes to $+\infty$ their upper-bound is asymptotically equivalent to
 $$\Delta^2+\frac{3}{2^\frac{2}{3}}\Delta^\frac{5}{3}+
 2^\frac{2}{3}\Delta^{\frac{4}{3}}+2\Delta + \mathcal{O}(\Delta^{\frac{2}{3}})
 $$ which is larger than our upper-bound by $2\Delta + \mathcal{O}(\Delta^{\frac{2}{3}})$. 
 This is a really minor improvement and this can be achieved with the entropy compression argument.
 The best known lowed-bound on the maximal Thue number for any maximum degree $\Delta$ is $\Omega\left(\frac{\Delta^2}{\log \Delta}\right)$ \cite{alongraph}, so this could be the case that even the first coefficient is not optimal.
 However, the fact that our method is simpler allowed us to easily improve the analysis while still providing a shorter proof.

\section{(Weak) total Thue coloring}
In this section, we need to consider three kinds of paths
\begin{itemize}
 \item \emph{vertex-paths}: sequences of consecutive adjacent vertices (they were simply called path in the previous section),
 \item \emph{edges-paths}: sequences of consecutive adjacent edges,
 \item \emph{mixed-paths}: alternating sequences of vertices and edges such that consecutive elements are adjacent.
\end{itemize}
In each of these definitions, we require that the paths are simple, that is each vertex or edge appears at most once in the path (we allow an edge path to go though the same vertex multiple time since it does not really matter). An \emph{element} of a graph is an edge or a vertex of the graph.

In this section, we need to color graphs element by element, but when we color an edge this might be the case that one or both of its vertices are not colored yet.
Thus the graph induced by the colored elements is not necessarily a proper graph in the sense that
some edges might be missing one or two vertices.
But for our inductive approach to hold, we need our result to hold for such objects.
We do not want to formalize this notion, but one way to properly dot it would be to define a graph as a pair of sets of objects (the vertices and the edges) and three relations (the adjacency relation between vertices, the adjacency relation between edges and the adjacency relation between vertices and edges).

Thus if $S$ is a set of edges or vertices of a graph $G$
then $G\setminus S$ is the graph obtained by deleting exactly the vertices and edges of $S$ (that is, we do not remove edges connected to some vertices of $S$ unless they also belong to $S$). 
Also if two edges are connected by a vertex $v$ then they are still considered to be connected in $G\setminus \{v\}$ and in particular they can still appear consecutively in an edge-path  of $G\setminus \{v\}$ (similarly two adjacent vertices are still adjacent even if we remove their shared edge and they can still be consecutive in a vertex path).
That is, the set of vertex paths (resp. edge paths) of $G\setminus S$
is the set of sequences of elements of $G\setminus S$ that are
vertex paths (resp. edge paths) of $G$.

\subsection{Weak total Thue coloring}\label{sectionwttc}
Given a graph $G$, a set $S$ of edges and vertices of $G$ and a list assignment $L$ of $G$, the set $C_L(G\setminus S)$ is the set of weak total Thue colorings of $G\setminus S$ respecting the list assignment of $L$ restricted to $G\setminus S$.
We are now ready to state our Theorem and the associated Lemma.
\begin{theorem}\label{weaktotalthuecoloringthm}
 For every graph $G$ with maximum degree $\Delta\ge1$, we have
 $$\pi_{\operatorname{T_wch}}(G)\le6\Delta\,.$$ 
\end{theorem}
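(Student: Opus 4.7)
The plan is to prove by induction on the total number of elements of $G$ the stronger lemma: for every graph $G$ of maximum degree at most $\Delta$, every list assignment $L$ whose lists (on both vertices and edges) have size at least $6\Delta$, and every element $e$ (vertex or edge) of $G$,
\[|C_L(G)|\ \ge\ 2\Delta\cdot|C_L(G\setminus\{e\})|,\]
where $C_L(\cdot)$ denotes the set of weak total Thue colorings respecting $L$. Iterating this bound from the empty graph (which has one coloring) immediately gives the theorem. Structurally this follows the template of Lemma in Section~2.2 with $\gamma=6\Delta$ and $\alpha=2\Delta$: I define $F$ to be the set of colorings of $G$ respecting $L$ that are repetitive but restrict to a weak total Thue coloring of $G\setminus\{e\}$, so $|C_L(G)|=6\Delta\cdot|C_L(G\setminus\{e\})|-|F|$, and partition $F=\bigcup_{i\ge 1}F_i$ according to the length $2i$ of a shortest mixed-path square witnessing that $c$ is repetitive. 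Since $G\setminus\{e\}$ carries a non-repetitive restriction of $c$, any such square must pass through $e$.

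For each $c\in F_i$, fix a witnessing mixed path $p$ of length $2i$ through $e$ and let $p'$ be the half of $p$ that contains $e$. Exactly as in Theorem~\ref{improvingthuenumber}, the square structure reconstructs $c$ from the pair consisting of $p$ and the coloring it induces on $G\setminus p'$. Since $p'$ contains $e$ together with $i-1$ other elements, $i-1$ iterated applications of the inductive hypothesis give
\[|C_L(G\setminus p')|\ \le\ \frac{|C_L(G\setminus\{e\})|}{(2\Delta)^{i-1}}.\]

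The step specific to this theorem is counting mixed paths of length $2i$ through $e$. Such a path alternates $i$ vertices and $i$ edges, and a direct bookkeeping argument (fix the position of $e$ in the path, then build the two sides outward by extending at each step through at most $\Delta-1$ incident edges not yet used) yields $N_i\le i\Delta(\Delta-1)^{i-1}$ when $e$ is a vertex, and $N_i\le 2i(\Delta-1)^{i-1}$ when $e$ is an edge, the extra factor of $2$ in the edge case recording the two orientations of $e$ inside the path. Combining these with the previous estimate and using $\sum_{i\ge 1} i x^{i-1}=(1-x)^{-2}$ with $x=(\Delta-1)/(2\Delta)$ and $1-x=(\Delta+1)/(2\Delta)$ gives
\[\frac{|F|}{|C_L(G\setminus\{e\})|}\ \le\ C\cdot\left(\frac{2\Delta}{\Delta+1}\right)^{\!2},\]
with $C=\Delta$ in the vertex case and $C=2$ in the edge case.

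The required inequality $6\Delta-|F|/|C_L(G\setminus\{e\})|\ge 2\Delta$ then reduces to $(\Delta+1)^2\ge\Delta^2$ in the vertex case and to $(\Delta+1)^2\ge 2\Delta$ in the edge case, both trivially true for every $\Delta\ge 1$. I expect the main obstacle to be the path counting: unlike the vertex paths of the previous section, mixed paths alternate two element types and the distinguished element $e$ plays genuinely different roles depending on whether it is a vertex or an edge, so a little care is needed to extract the sharp leading constants $\Delta$ and $2$ that make the clean $6\Delta$ bound fall out.
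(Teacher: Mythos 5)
Your proof is correct and follows essentially the same inductive strategy as the paper: prove by induction that with lists of size $6\Delta$, removing any element drops $|C_L|$ by at most a fixed multiplicative factor, decompose $F$ by the length of a square through the new element, count mixed paths, and sum a geometric series. The only differences are cosmetic: your mixed-path counts ($i\Delta(\Delta-1)^{i-1}$ and $2i(\Delta-1)^{i-1}$) are a bit sharper than the paper's ($i\Delta^{i}$ and $2i\Delta^{i-1}$), and you compensate by choosing the weaker growth rate $\alpha=2\Delta$ where the paper uses $3\Delta$, both landing at the same $6\Delta$ bound.
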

\begin{lemma}\label{weaktotalthuelemma}
Let $\Delta\ge2$ be an integer.
Let $G$ be a graph of maximal degree less than $\Delta$ and $L$ be a list assignment of $G$.
Suppose each list is of size at least $6\Delta$
then for any vertex or edge $x$ of $G$:
 $$|C_L(G)|\ge 3\Delta|C_L(G\setminus\{x\})|\,.$$
\end{lemma}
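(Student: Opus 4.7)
The plan is to mimic the inductive template used in the non-repetitive lemma just above, adapted to mixed paths and to the fact that $x$ may be either a vertex or an edge. I induct on the total number of elements of $G$; the base case $|G|=1$ is immediate since $|C_L(G)|=|L(x)|\ge 6\Delta\ge 3\Delta$ and $|C_L(G\setminus\{x\})|=1$. For the inductive step, let $F$ be the set of colorings of $G$ respecting $L$ that are not weak total Thue colorings but whose restriction to $G\setminus\{x\}$ is one. Since at least $6\Delta$ colors are available at $x$, we have
\[
|C_L(G)|\ge 6\Delta\,|C_L(G\setminus\{x\})|-|F|,
\]
so it suffices to prove $|F|\le 3\Delta\,|C_L(G\setminus\{x\})|$.

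Each $c\in F$ contains a mixed path $p$ of some even length $2i$ whose induced color sequence is a square, and $p$ must pass through $x$; otherwise the restriction of $c$ to $G\setminus\{x\}$ would already be repetitive. Let $p'$ be the half of $p$ that contains $x$, so $|p'|=i$; the square condition forces the colors on $p'$ from those on the other half, so $c$ is uniquely determined by the pair $(p, c|_{G\setminus p'})$. Decomposing $F=\bigcup_{i\ge 1}F_i$ by the length of the witnessing square and applying the induction hypothesis $i-1$ times (once per element of $p'\setminus\{x\}$) gives $|C_L(G\setminus p')|\le |C_L(G\setminus\{x\})|/(3\Delta)^{i-1}$, and therefore
\[
|F_i|\le N_i\cdot\frac{|C_L(G\setminus\{x\})|}{(3\Delta)^{i-1}},
\]
where $N_i$ denotes the number of mixed paths of $2i$ elements containing $x$.

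The main technical step, and the one I expect to be the main obstacle, is bounding $N_i$. Compared with the pure vertex-path counting of the previous section, the alternating vertex--edge structure and the two possible types of $x$ require more careful bookkeeping. Exploiting the fact that along a mixed path each vertex-to-edge step offers at most $\Delta$ choices (or $\Delta-1$ once an incident edge has already been used), while each edge-to-vertex step is forced, and then summing the contributions from the $2i$ possible positions of $x$ while halving for orientation, one obtains $N_i\le\Delta(\Delta-1)^{i-2}(i\Delta-1)$ when $x$ is a vertex, with an analogous but strictly smaller bound when $x$ is an edge.

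Once this bound is in place, the remaining verification is a geometric-series computation. Writing $r=(\Delta-1)/(3\Delta)<1/3$ and using $\sum_{i\ge 1}ir^{i-1}=(1-r)^{-2}$ together with $\sum_{i\ge 1}r^{i-1}=(1-r)^{-1}$, the vertex-case sum $\sum_{i\ge 1}N_i/(3\Delta)^{i-1}$ evaluates to $3\Delta^2(3\Delta+1)/(2\Delta+1)^2$. The required inequality $3\Delta^2(3\Delta+1)\le 3\Delta(2\Delta+1)^2$ reduces to $\Delta^2+3\Delta+1\ge 0$, which is trivial; the edge case is strictly slacker. Substituting back into the displayed identity yields $|C_L(G)|\ge 3\Delta\,|C_L(G\setminus\{x\})|$, closing the induction.
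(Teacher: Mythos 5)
Your proof is correct and follows essentially the same route as the paper: induct on the number of elements, decompose the bad set $F$ by square half-length $i$, recover a coloring in $F_i$ from the witnessing mixed path $p$ and the restriction to $G\setminus p'$, apply the induction hypothesis $i-1$ times, and close with a geometric series. The only deviation is that you work out a sharper bound on the number of mixed paths through $x$ (the paper contents itself with the cruder but cleaner $N_i\le i\Delta^i$, which already gives $|F|\le\tfrac{9}{4}\Delta\,|C_L(G\setminus\{x\})|$ and hence $|C_L(G)|\ge\tfrac{15}{4}\Delta\,|C_L(G\setminus\{x\})|$); your tighter count buys a marginally better constant in the intermediate estimate but changes nothing in the final statement, and the paper's simpler bound keeps the algebra lighter.
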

\begin{proof}
 Let us show this by induction on the sum of the number of vertices and edges of $G$.
 This is true for the graph with a single vertex since the empty graph admits exactly one coloring.
 Let $n$ be an integer such that the Lemma holds for any graph with less than $n$ vertices and edges.
 
 Let $G=(V,E)$ be a graph with $|E|+|V|=n$ of maximal degree less than $\Delta$ and
 $L$ be a list assignment of $G$ such that each list is of size at least $6\Delta$.
 Let $x$ be an edge or a vertex of $G$.
 
 Let $F$ be the set of colorings of $G$ respecting $L$ that are weak total repetitive and
 that induce Weak total Thue coloring of $G\setminus\{x\}$. We then have
 \begin{equation}\label{maineqFweaknonrep}
  |C_L(G)|= 6\Delta|C_L(G\setminus\{x\})|- |F|
 \end{equation}
We need to upper-bound the size of $F$.
Let $F_i$ be the set of colorings from $F$ that contains a mixed-path of length $2i$ that induces a square.
Clearly $|F|\le\sum_{i\ge1} |F_i|$.
Thus for any coloring $c$ from $F_i$ there is mixed-path $p$ of length $2i$ such that 
\begin{itemize}
 \item $p$ induces a square in $c$,
 \item $p$ contains $x$ and we can call $p'$ the half of $p$ that contains $x$,
 \item the coloring induced over $G\setminus p'$ is non-repetitive,
 \item $p$ and the coloring induced over $G\setminus p'$ uniquely determines $c$ 
 (since the second half of the square is identical to the first half).
\end{itemize}
Given $p$ and $p'$ the number of such coloring from $F_i$ is at most
$|C_L(G\setminus p')|$, but since $p'$ contains $x$ and $n-1$ other elements our induction hypothesis implies that this quantity is bounded by
$$|C_L(G\setminus p')|\le \frac{|C_L(G\setminus \{x\})|}{(3\Delta)^{i-1}}\,.$$
If $x$ is a vertex then there are at most $i\Delta^i$ mixed-paths of length $2i$ going through $x$.
If $x$ is an edge then there are at most $2i\Delta^{i-1}\le i\Delta^i$ mixed-paths of length $2i$ going through $x$.
We deduce
$$|F_i|\le i\Delta^{i} \frac{|C_L(G\setminus \{x\})|}{(3\Delta)^{i-1}}= i\Delta \frac{|C_L(G\setminus \{x\})|}{3^{i-1}}$$
Thus we can finally upper-bound $|F|$
\begin{align*}
 |F|&\le\Delta |C_L(G\setminus \{x\})|\sum_{i\ge1}  \frac{i}{3^{i-1}}\\
 |F|&\le\frac{9}{4}\Delta |C_L(G\setminus \{x\})|
\end{align*}
Together with equation \eqref{maineqFweaknonrep}, it implies
\begin{align*}
|C_L(G)|&\ge 6\Delta|C_L(G\setminus\{x\})|-\frac{9}{4}\Delta |C_L(G\setminus \{x\})|\\
|C_L(G)|&\ge \frac{15}{4}\Delta|C_L(G\setminus\{x\})|\ge3\Delta|C_L(G\setminus\{x\})|
\end{align*}
which concludes our proof.
\end{proof}
One slightly improve the leading coefficient by making the last inequality of the proof tight ($15/4\ge3$ is not really tight) and a better analysis leads to a coefficient $\gamma=5.21914$ instead of $6$ ($\gamma$ is a root of the polynomial $-3 - 4 x - 20 x^2 + 4 x^3$). 
The same result can be showed with a longer proof relying on entropy-compression.
However, in the next subsection we slightly improve this result for large values of $\Delta$ (replacing $6$ by $4.25$) and this improved bound does not seems easy to reproduce with entropy compression or LLL.

\subsection{Weak total Thue coloring for large maximal degree}
We improve our bound on the weak total Thue number of graphs for large values of $\Delta$.
\begin{theorem}\label{weaktotalthuecoloringthmimproved}
 For every graph $G$ with maximum degree $\Delta\ge 300$, we have
 $$\pi_{\operatorname{T_wch}}(G)\le\lceil4.25\Delta\rceil\,.$$ 
\end{theorem}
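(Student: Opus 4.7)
The plan is to establish the theorem via a strengthened inductive lemma in the spirit of Lemma \ref{weaktotalthuelemma}: for $\Delta\ge 300$, any graph $G$ of maximum degree at most $\Delta$ and any list assignment $L$ whose lists all have size at least $\lceil 4.25\Delta\rceil$, there is a constant $\alpha>1$ such that for every element $x$
\[|C_L(G)|\ge\alpha\Delta\,|C_L(G\setminus\{x\})|.\]
Iterating this from the empty graph (where $|C_L|=1$) then yields $|C_L(G)|>0$ and hence the theorem.

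The induction scaffolding is identical to that of Lemma \ref{weaktotalthuelemma}. I would write
\[|C_L(G)|=\lceil 4.25\Delta\rceil\,|C_L(G\setminus\{x\})|-|F|,\]
decompose $F=\bigcup_{i\ge 1}F_i$ by the length $2i$ of the mixed-square introduced when $x$ is added, and bound
\[|F_i|\le N_i\cdot\frac{|C_L(G\setminus\{x\})|}{(\alpha\Delta)^{i-1}},\]
where $N_i$ counts the mixed-paths of length $2i$ through $x$ and the denominator comes from applying the induction hypothesis to remove the $i-1$ non-$x$ elements of $p'$. It then remains to pick $\alpha$ and to verify $\lceil 4.25\Delta\rceil\ge\alpha\Delta+\sum_i N_i/(\alpha\Delta)^{i-1}$.

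Two refinements carry the constant from $6$ down to $4.25$. First, the crude bound $N_i\le i\Delta^i$ used in the proof of Lemma \ref{weaktotalthuelemma} can be tightened to $N_i\le i\Delta(\Delta-1)^{i-1}$ when $x$ is a vertex and to $N_i\le 2i(\Delta-1)^{i-1}$ when $x$ is an edge, by noting that every edge of a mixed-path after the first is forbidden from coinciding with the previously traversed edge. Second, $\alpha$ is kept as a free parameter and optimised rather than fixed at $3$, and the first few $F_i$ (chiefly $i=1$, whose bound $|F_1|\le\Delta\,|C_L(G\setminus\{x\})|$ does not depend on $\alpha$) are isolated from the geometric tail. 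The required inequality becomes $\gamma\ge\alpha+h_\Delta(\alpha)$ for an explicit $h_\Delta$ converging to a limit $h_\infty$ as $\Delta\to\infty$, and the hypothesis $\Delta\ge 300$ is calibrated so that the $O(1/\Delta)$ discrepancy between $h_\Delta$ and $h_\infty$ is absorbed by the ceiling in $\lceil 4.25\Delta\rceil$ once $\alpha+h_\infty(\alpha)$ has been optimised to at most $4.25$.

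The main obstacle is the numerical sharpness of this optimisation. A naive pass through the scheme plateaus near the value $5.21914$ mentioned just after Lemma \ref{weaktotalthuelemma} (the root of $4x^3-20x^2-4x-3=0$), so reaching $4.25$ requires combining the refinements carefully, particularly distinguishing the vertex and edge cases for $x$ and handling the $i=1$ term separately from the geometric sum. Once the analytic inequality has been verified for all $\Delta\ge 300$, the induction closes exactly as in Lemma \ref{weaktotalthuelemma} and the theorem follows.
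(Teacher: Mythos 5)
The key idea of the paper's proof of Theorem~\ref{weaktotalthuecoloringthmimproved} is absent from your sketch, and the refinements you propose cannot bridge the gap. You keep a \emph{single} growth constant $\alpha$ so that $|C_L(G)|\ge\alpha\Delta\,|C_L(G\setminus\{x\})|$ for every element $x$. With a single $\alpha$, the dominant contribution to $|F|$ comes from the vertex case, where the number of mixed-paths of length $2i$ through $x$ is on the order of $i\Delta^i$ and the induction hypothesis only gives a denominator $(\alpha\Delta)^{i-1}$. This forces the constraint $\gamma\ge\alpha+\alpha^2/(\alpha-1)^2+O(1/\Delta)$, whose minimum over $\alpha>1$ is the value $5.21914$ you mention. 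Your two refinements do not change this asymptotically: replacing $\Delta^i$ by $\Delta(\Delta-1)^{i-1}$ (or $2\Delta^{i-1}$ by $2(\Delta-1)^{i-1}$ for edges) only perturbs the ratio $\frac{\Delta-1}{\alpha\Delta}\to\frac{1}{\alpha}$ as $\Delta\to\infty$, and isolating $i=1$ from the geometric tail changes nothing because the $i=1$ term already contributes exactly the same quantity $\Delta\,|C_L(G\setminus\{x\})|$ inside the sum. So the plateau at $5.22$ is genuine for your scheme, and no calibration of $\Delta\ge 300$ can absorb a gap of order $\Delta$.

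The paper's actual argument uses \emph{two distinct} growth rates, one for edges and one for vertices: $|C_L(G)|\ge 4.2\Delta\,|C_L(G\setminus\{e\})|$ for an edge $e$ and $|C_L(G)|\ge 1.62\Delta\,|C_L(G\setminus\{v\})|$ for a vertex $v$. The decisive step is in the vertex case: a half mixed-path $p'$ of length $i$ containing $v$ contains, besides $v$, at least $\lceil(i-1)/2\rceil$ edges, because mixed-paths alternate edges and vertices. Peeling off the $i-1$ extra elements one by one and applying the appropriate rate to each, the induction hypothesis yields $|C_L(G\setminus p')|\le |C_L(G\setminus\{v\})|/(\sqrt{1.62\cdot 4.2}\,\Delta)^{i-1}$, i.e.\ the effective denominator is the geometric mean $\sqrt{1.62\cdot 4.2}\approx 2.61$ rather than $1.62$. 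This is what brings the vertex-case sum $\Delta\sum_i i/(\sqrt{1.62\cdot4.2})^{i-1}$ down to roughly $2.63\Delta$, so that $4.25\Delta-2.63\Delta\ge 1.62\Delta$ closes the induction. Meanwhile, the edge case has only $2i\Delta^{i-1}$ paths, an extra factor $\Delta$ smaller, so its whole contribution is $O(1)$ and the hypothesis $\Delta\ge 300$ suffices to make $4.25\Delta-O(1)\ge 4.2\Delta$. Your sketch gestures at ``distinguishing the vertex and edge cases'' only in the path count; the missing idea is that the distinction must appear in the \emph{induction hypothesis itself}, together with the observation that alternation lets you use the geometric mean of the two rates when bounding $|C_L(G\setminus p')|$.
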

This is a simple corollary of the following Lemma.
\begin{lemma}
Let $\Delta\ge300$ be an integer.
Let $G$ be a graph of maximal degree less than $\Delta$ and $L$ be a list assignment of $G$.
Suppose each list is of size at least $4.25\Delta$
then for any vertex $v$ of $G$:
 $$|C_L(G)|\ge 1.62\Delta|C_L(G\setminus\{v\})|\,.$$
  and for any edge $e$ of $G$:
 $$|C_L(G)|\ge 4.2\Delta|C_L(G\setminus\{e\})|\,.$$
\end{lemma}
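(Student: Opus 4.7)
The plan is to follow the induction on $|V(G)|+|E(G)|$ used in Lemma \ref{weaktotalthuelemma}, this time exploiting the asymmetry between the two target inequalities. For each element $x\in V(G)\cup E(G)$ I would start from
\[
|C_L(G)| \ge 4.25\Delta\,|C_L(G\setminus\{x\})| - |F|,
\]
where $F$ consists of the colorings of $G$ that induce a valid weak total Thue coloring on $G\setminus\{x\}$ but themselves contain a square. As in Lemma \ref{weaktotalthuelemma}, decompose $F=\bigcup_{i\ge 1}F_i$ according to the half-length $i$ of the offending mixed-path $p$, and let $p'$ be the half of $p$ containing $x$.

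The key new ingredient is that the induction now delivers two different multiplicative factors, $\alpha_v:=1.62\Delta$ for each vertex removed and $\alpha_e:=4.2\Delta$ for each edge removed. Applying the induction to the $i-1$ elements of $p'\setminus\{x\}$ one by one gives
\[
|C_L(G\setminus p')| \le \frac{|C_L(G\setminus\{x\})|}{\alpha_v^{a}\,\alpha_e^{b}},
\]
where $a$ and $b$ are the numbers of vertices and edges in $p'\setminus\{x\}$. Because a mixed-path alternates, this denominator is always at least $\eta^{i-1}\Delta^{i-1}$ with $\eta:=\sqrt{1.62\cdot 4.2}=\sqrt{6.804}\approx 2.608$, up to a prefactor $\ge 1$ depending on the parity of $i$ and on whether $p'$ starts with a vertex or an edge. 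Plugging this worst-case bound into the path counts recalled in the proof of Lemma \ref{weaktotalthuelemma} ($i\Delta^i$ mixed-paths of length $2i$ through a vertex, $2i\Delta^{i-1}$ through an edge) and summing via the identity $\sum_{i\ge 1}i\eta^{-(i-1)} = \eta^2/(\eta-1)^2$, the vertex case produces $|F|\le (\eta^2/(\eta-1)^2)\,\Delta\,|C_L(G\setminus\{v\})|$, and the edge case produces $|F|\le C\,|C_L(G\setminus\{e\})|$ for an absolute constant $C$, because the $\Delta/2$ saving in the edge path count absorbs one factor of $\Delta$.

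These bounds are exactly what is needed: a numerical check gives $\eta^2/(\eta-1)^2 \approx 2.6299 < 2.63 = 4.25-1.62$, which yields $|C_L(G)|\ge 1.62\Delta|C_L(G\setminus\{v\})|$, and a similar computation gives $C\lesssim 14$, so $|C_L(G)|\ge (4.25\Delta - C)|C_L(G\setminus\{e\})|\ge 4.2\Delta|C_L(G\setminus\{e\})|$ as soon as $C\le 0.05\Delta$, which is the role of the hypothesis $\Delta\ge 300$. The main obstacle is the tightness of the vertex bound: the constants $1.62$, $4.2$ and $4.25$ are calibrated so that $\eta^2/(\eta-1)^2$ just barely beats $4.25-1.62$. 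Should the worst-case denominator estimate turn out to be insufficient after a careful computation, the remaining freedom is to split the sum by the parity of $i$ and the starting type of $p'$, which would introduce extra favorable factors of $1.62/\eta<1$ or $1.62/4.2<1$ in half of the sub-cases; but the rough check above suggests this refinement is not actually needed.
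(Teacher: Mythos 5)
Your strategy matches the paper's: split into vertex and edge cases, and exploit the alternation of mixed paths to strengthen the per-element factor in the vertex case. The one concrete problem is the claim that the denominator $\alpha_v^a\alpha_e^b$ is always at least $\eta^{i-1}\Delta^{i-1}$ with $\eta=\sqrt{1.62\cdot 4.2}$, ``up to a prefactor $\ge 1$''. This is only correct when $x$ is a \emph{vertex}: then $x$ occupies a vertex slot of $p'$, so $p'\setminus\{x\}$ contains at least $\lceil(i-1)/2\rceil$ edges and $b\ge a$, which gives $1.62^a4.2^b\ge(1.62\cdot 4.2)^{(i-1)/2}=\eta^{i-1}$. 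When $x$ is an \emph{edge}, the opposite happens: $x$ occupies an edge slot, $p'\setminus\{x\}$ may have one more vertex than edge, and the denominator can be smaller than $\eta^{i-1}\Delta^{i-1}$ (e.g.\ $i=2$, $p'=\{v,e\}$, $p'\setminus\{e\}=\{v\}$ gives denominator $1.62\Delta<\eta\Delta$). Your stated prefactor is in fact $\le 1$ in that case, not $\ge 1$.

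Your argument is nevertheless recoverable and essentially becomes the paper's: for the edge case one should simply apply the induction with the \emph{universal} per-element factor $1.62\Delta$ (valid for every element since $4.2\ge 1.62$), giving
\[
|F|\le \sum_{i\ge1}\frac{2i\Delta^{i-1}}{(1.62\Delta)^{i-1}}\,|C_L(G\setminus\{e\})| = 2\,\frac{1.62^2}{0.62^2}\,|C_L(G\setminus\{e\})|\approx 13.65\,|C_L(G\setminus\{e\})|,
\]
and the $\Delta^{-1}$ saving in the edge path count absorbs the remaining slack once $\Delta\ge 300$. Your stated $C\lesssim 14$ is exactly this value (whereas plugging $\eta$ in would give $\approx 5.3$), so your numerics indicate you were implicitly doing the right computation even though the written justification for the denominator does not cover the edge case. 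With this fixed, the proof is the same as the paper's, including the (very tight) check $\eta^2/(\eta-1)^2<4.25-1.62$ for the vertex case.
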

\begin{proof}
The proof is rather similar to the proof of Lemma \ref{weaktotalthuelemma} We proceed with the same induction with the following difference, but we need to distinguish between edges and vertices.

Let us start with the case where $e$ is an edge.
We can use the exact same argument as in the previous proof with the fact that there are at most $2i\Delta^{i-1}$ mixed-paths of length $2i$ going through $e$ and the fact that for any element $x$ of $G$
$|C_L(G)|\ge 1.62\Delta|C_L(G\setminus\{x\})|\,.$ to deduce
\begin{align*}
|C_L(G)|&\ge 4.25\Delta|C_L(G\setminus\{e\})|- \sum_{i\ge1}\frac{2i\Delta^{i-1}|C_L(G\setminus\{e\})|}{(1.62\Delta)^{i-1}}\\
|C_L(G)|&\ge|C_L(G\setminus\{e\})|\Delta \left(4.25- \frac{2}{\Delta}\sum_{i\ge1}\frac{i}{1.62^{i-1}}\right)\\
|C_L(G)|&\ge|C_L(G\setminus\{e\})|\Delta \left(4.25- \frac{2}{\Delta}\frac{1.62^{2}}{0.62^2}\right)
\end{align*}
Since $\Delta>300$, numerical computations give
$$|C_L(G)|\ge 4.2\Delta|C_L(G\setminus\{e\})|\,.$$ which conclude this case.

If $v$ is a vertex then the half $p'$ of a mixed path of length $2i$ containing $x$ contains at least $\lceil \frac{i-1}{2}\rceil$ edges. 
Thus in this case our induction hypothesis implies
$$|C_L(G\setminus p')|\le \frac{|C_L(G\setminus \{x\})|}{(\sqrt{1.62\times4.2}\Delta)^{i-1}}\,.$$
Moreover, there are at most $i\Delta^{i}$ mixed-paths of length $2i$ going through $v$ which gives:
\begin{align*}
|C_L(G)|&\ge 4.25\Delta|C_L(G\setminus\{e\})|- \sum_{i\ge1}\frac{i\Delta^{i}|C_L(G\setminus\{e\})|}{(\sqrt{1.62\times4.2}\Delta)^{i-1}}\\
|C_L(G)|&\ge|C_L(G\setminus\{e\})|\Delta \left(4.25-\sum_{i\ge1}\frac{i}{(\sqrt{1.62\times4.2})^{i-1}}\right)
\end{align*}
Numerical computations give 
 $$|C_L(G)|\ge 1.62\Delta|C_L(G\setminus\{v\})|$$
which conclude our proof.
\end{proof}

By exploiting the fact that edges behave in a slightly better way than vertices and by using two different ``growth rate'' to distinguish between these cases, we were able to show a slightly stronger result. 
It is not clear whether this proof can be adapted to the entropy compression method and if so it probably requires a different approach or a really complicated analysis.

\subsection{Total Thue coloring}\label{sectionttc}
For any set $S$ of edges and vertices of a graph $G$ and any list assignment of $G$, the set $C_L(G\setminus S)$ is the set of total Thue colorings of $G\setminus S$ respecting the list assignment of $L$ restricted to $G\setminus S$. 

\begin{theorem}\label{totalthuecoloringthm}
 For every graph $G$ with maximum degree $\Delta$, we have
 $$\pi_{\operatorname{Tch}}(G)\le
 \Delta^2+\frac{3}{2^\frac{1}{3}}\Delta^{\frac{5}{3}}+8\Delta^{\frac{4}{3}}+1\,.$$ 
\end{theorem}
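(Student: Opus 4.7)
The plan is to derive Theorem~\ref{totalthuecoloringthm} from a stronger inductive lemma, in the spirit of Theorem~\ref{improvingthuenumber}. Specifically, I would prove by induction on $|V(G)|+|E(G)|$ that, provided each list has size at least the claimed bound, there is a growth rate $\alpha=\Delta(\Delta-1)(1+c\Delta^{-1/3})$ (with $c$ to be optimized) such that $|C_L(G)|\ge\alpha|C_L(G\setminus x)|$ for every vertex or edge $x$ of $G$. For the inductive step, let $F$ denote the colorings of $G$ that fail to be total Thue but restrict to a valid total Thue coloring of $G\setminus x$; the identity $|C_L(G)|=|L_x|\cdot|C_L(G\setminus x)|-|F|$ reduces the task to upper-bounding $|F|$.

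If $x$ is a vertex $v$, only new vertex-color squares can arise and they must lie on a vertex-path through $v$. This is handled verbatim by the argument of Theorem~\ref{improvingthuenumber}: the path count $i\Delta(\Delta-1)^{2i-2}$ combined with $|C_L(G\setminus p')|\le|C_L(G\setminus v)|/\alpha^{i-1}$ produces a $\Delta/(1-r)^2$ contribution with $r=(\Delta-1)^2/\alpha$.

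If $x$ is an edge $e$, the new squares must live on a vertex-path that uses $e$. The dominant sub-case is an edge-color square on a vertex-path of $2i+1$ vertices whose $2i$ edges include $e$; adapting the ``split at the central position and extend to both sides'' argument of Theorem~\ref{improvingthuenumber} to vertex-paths through a fixed edge yields at most $2i(\Delta-1)^{2i-1}$ such paths, and the half $p'$ containing $e$ has $i$ edges, so the induction hypothesis (applied $i-1$ times for edge removals) gives $|C_L(G\setminus p')|\le|C_L(G\setminus e)|/\alpha^{i-1}$. A vertex-color square on a $2i$-vertex path using $e$ as one of its edges is counted similarly, but now the relevant half contains $i$ whole vertices and its removal also eliminates $e$, so the denominator becomes $\alpha^{i}$ rather than $\alpha^{i-1}$; this sub-case therefore contributes only $O(\Delta^{-4/3})\,|C_L(G\setminus e)|$ and is absorbed into the error terms.

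Putting the bounds together, the binding constraint reads $|L_x|-\alpha\ge 2(\Delta-1)/(1-r)^2$, which to leading order becomes $|L_x|\ge\Delta^2+(c+2/c^2)\Delta^{5/3}+\cdots$. Since $\min_{c>0}(c+2/c^2)=3/2^{1/3}$ is attained at $c=2^{2/3}$, this fixes the dominant coefficient of the theorem; retaining the next term in the geometric-series expansion produces the $O(\Delta^{4/3})$ correction (bounded by $8\Delta^{4/3}$ in the statement) together with the $+1$ coming from rounding the list-size threshold to an integer. The main technical hurdle will be carrying out the combinatorial count of vertex-paths containing a fixed edge---the analogue of the short-half/long-half bookkeeping of Theorem~\ref{improvingthuenumber}, now pivoting on an edge rather than on a vertex---and checking cleanly that the secondary vertex-color contribution in the edge case indeed decays like $\Delta^{-4/3}$ rather than interfering with the main $\Delta^{5/3}$ term.
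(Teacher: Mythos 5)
Your proposal misses the weak total Thue constraint entirely. A (strong) total Thue coloring must have three properties: (i) the alternating vertex-edge color sequence along any path is square-free (this is the \emph{weak} total Thue condition), (ii) the vertex-color sequence along any path is square-free, and (iii) the edge-color sequence along any path is square-free. You only account for (ii) and (iii). When you add a vertex $v$ (or an edge $e$), new squares can appear in the \emph{mixed} color sequence without any vertex-color or edge-color square appearing; for instance, a mixed path $v_1 e_1 v_2 e_2 v_3 e_3$ with $c(v_1)=c(e_2)$, $c(e_1)=c(v_3)$, $c(v_2)=c(e_3)$ is a mixed square that induces no vertex-path or edge-path square. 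The paper's proof splits $F$ into two families, $M_i$ (mixed-path squares of half-length $i$ through $x$) and $S_i$ (vertex-path or edge-path squares, depending on whether $x$ is a vertex or an edge), and bounds both; your proposal handles only the analogue of $S_i$. The mixed contribution happens to be lower order because each step along a mixed path covers only one element (so the count $i\Delta^i$ is small relative to $\alpha^{i-1}\approx\Delta^{2(i-1)}$), which is why it ends up absorbed into the $8\Delta^{4/3}$ term in the final statement, but it still must be bounded for the induction to close.

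A secondary issue: when $x$ is an edge, the paper's conventions keep two endpoints adjacent even after their shared edge is removed, so adding $e$ back creates no new vertex-path squares and your ``vertex-color square on a path using $e$'' sub-case is vacuous under those conventions. If you instead use the standard convention in which removing $e$ can disconnect its endpoints as a vertex-path, then the induction hypothesis no longer applies cleanly to the deletions needed in the $M_i$ and $S_i$ analyses (deleting a single element of a path must remove exactly one element from the graph, not change which other paths exist). Either way, you should adopt the paper's extended notion of graph with separate adjacency relations, which also explains why the paper's path counts are the cleaner $\Delta^{2i-1}$ rather than $\Delta(\Delta-1)^{2i-2}$. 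Your optimization $\min_{c>0}(c+2/c^2)=3/2^{1/3}$ at $c=2^{2/3}$ is correct and matches the paper's leading coefficient, but the proof needs the mixed-path bound added before it is complete.
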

This is a simple Corollary of the following Lemma.
\begin{lemma}
Let $\Delta\ge2$ be an integer and $\gamma =\frac{3}{2^{\frac{1}{3}}}+8\Delta^{-\frac{1}{3}}$.
Let $G$ be a graph of maximal degree less than $\Delta$ and $L$ be a list assignment of $G$.
Suppose each list is of size at least $\Delta^2(1+\gamma\Delta^{-\frac{1}{3}})$
then for any vertex or edge $x$ of $G$:
 $$|C_L(G)|\ge \Delta^2\left(1+2^{\frac{2}{3}}\Delta^{-\frac{1}{3}}\right)|C_L(G\setminus\{x\})|\,.$$
\end{lemma}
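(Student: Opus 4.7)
The plan is to mimic the inductive argument of the lemma establishing Theorem \ref{improvingthuenumber} and of Lemma \ref{weaktotalthuelemma}, inducting on the number $|V(G)|+|E(G)|$ of elements of $G$. The base case is trivial. In the inductive step, writing $F$ for the set of $L$-colorings of $G$ that are not total Thue but do induce a total Thue coloring of $G\setminus\{x\}$, we have
$$|C_L(G)|=|L(x)|\cdot|C_L(G\setminus\{x\})|-|F|,$$
and everything reduces to upper-bounding $|F|$.

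The new qualitative observation, specific to the total Thue setting, is that the type of the offending square is forced by the type of $x$. Indeed, by the convention recalled at the beginning of the section, removing an edge preserves every vertex-path and removing a vertex preserves every edge-path, so the only squares that the deletion of $x$ can destroy are vertex-path-squares containing the vertex $x$ (when $x$ is a vertex) or edge-path-squares containing the edge $x$ (when $x$ is an edge). In either case, let $F_i\subseteq F$ collect the colorings carrying such an offending path of length $2i$, and let $p'$ be the half of that path that contains $x$. Then $p'$ consists of $i$ elements, all of the same type as $x$, and $c\in F_i$ is determined by the path together with the restriction of $c$ to $G\setminus p'$, by copying the first half of the square onto the second.

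Applying the induction hypothesis $i-1$ times to the $i-1$ element-deletions that take $G\setminus\{x\}$ to $G\setminus p'$ yields
$$|C_L(G\setminus p')|\le\frac{|C_L(G\setminus\{x\})|}{\bigl(\Delta^2(1+2^{2/3}\Delta^{-1/3})\bigr)^{i-1}}.$$
A splitting argument that fixes the position of $x$ inside the shorter half of the path bounds the number of vertex- or edge-paths of length $2i$ through $x$ by at most $2i\Delta^{2i-1}$ (this is tight up to constants in the edge case and loose in the vertex case, just as in the proof of Lemma \ref{weaktotalthuelemma}). Combining these, $|F_i|\le 2i\Delta\,(1+2^{2/3}\Delta^{-1/3})^{-(i-1)}|C_L(G\setminus\{x\})|$, and summing the identity $\sum_{i\ge 1}i\,r^{-(i-1)}=r^2/(r-1)^2$ at $r=1+2^{2/3}\Delta^{-1/3}$ gives
$$|F|\le\frac{\Delta^{5/3}(1+2^{2/3}\Delta^{-1/3})^2}{2^{1/3}}\,|C_L(G\setminus\{x\})|.$$

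Substituting $|L(x)|\ge\Delta^2(1+\gamma\Delta^{-1/3})$ and comparing with the target $|C_L(G)|\ge\Delta^2(1+2^{2/3}\Delta^{-1/3})|C_L(G\setminus\{x\})|$, the required numerical inequality simplifies to $\gamma\ge 2^{-1/3}+2^{2/3}+2^{4/3}\Delta^{-1/3}+2\Delta^{-2/3}$. The leading constants collapse via $2^{-1/3}+2^{2/3}=3\cdot 2^{-1/3}$, exactly matching the constant part of the chosen $\gamma=3/2^{1/3}+8\Delta^{-1/3}$; the slack $(8-2^{4/3})\Delta^{-1/3}$ then dominates $2\Delta^{-2/3}$ for every $\Delta\ge 2$, since $8-2^{4/3}-2\cdot 2^{-1/3}>0$. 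The only genuinely new obstacle in the whole argument is the qualitative step in the second paragraph—it is what turns a two-type problem (vertex-squares and edge-squares interacting) into a clean induction with a single type of square per case—and it relies crucially on the convention that paths in $G\setminus S$ are exactly the paths of $G$ whose elements all survive. Everything else is geometric-series bookkeeping modeled on the preceding proofs.
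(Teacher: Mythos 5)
Your proposal contains a genuine gap in the key "qualitative observation."

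You claim that the type of the offending square is forced by the type of $x$: if $x$ is a vertex, only vertex-path squares can be destroyed, and if $x$ is an edge, only edge-path squares. But a (strong) total Thue coloring must simultaneously avoid three kinds of squares — vertex-path squares, edge-path squares, \emph{and} mixed-path squares (the last condition being inherited from the weak total Thue definition). Mixed paths contain both vertices and edges, so deleting a vertex $x$ can absolutely destroy a mixed-path square through $x$, and likewise deleting an edge. Your observation that "removing an edge preserves every vertex-path and removing a vertex preserves every edge-path" is correct, but it only rules out one of the two remaining square types, not both. The paper's proof accordingly splits $F$ into two families $M_i$ (mixed-path squares of length $2i$) and $S_i$ (same-type path squares of length $2i$), bounds each separately, and then sums. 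Your argument silently drops the entire $M_i$ contribution.

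Concretely, the paper's count for mixed paths through $x$ gives a summand of the form $i\Delta\cdot(\Delta(1+2^{2/3}\Delta^{-1/3}))^{-(i-1)}|C_L(G\setminus\{x\})|$, which when summed over $i$ contributes an extra additive term of order $\Delta|C_L(G\setminus\{x\})|$ to $|F|$. Because this term is $\Theta(\Delta)$ rather than $\Theta(\Delta^{5/3})$, it is dominated asymptotically and the paper absorbs it into lower-order terms (this is the "$1+$" inside the paper's parenthesis before the geometric series). Your final numerical inequality $\gamma\ge 2^{-1/3}+2^{2/3}+2^{4/3}\Delta^{-1/3}+2\Delta^{-2/3}$ therefore underestimates the true requirement, and it is a coincidence of the slack in $\gamma=3/2^{1/3}+8\Delta^{-1/3}$ that the conclusion would still be recoverable after the fix. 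The proof as written, however, does not establish the Lemma: you must add the case analysis for mixed-path squares and rerun the bookkeeping, which is exactly what the paper does via its $M_i$/$S_i$ decomposition.
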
\begin{proof}
 Let us show this by induction on the sum of the number of vertices and edges of $G$.
 This is true for the graph with a single vertex since the empty graph admits exactly one coloring.
 Let $n$ be an integer such that the Lemma holds for any graph with less than $n$ vertices and edges.
 
 Let $G=(V,E)$ be a graph with $|E|+|V|=n$ of maximal degree less than $\Delta$ and
 $L$ be a list assignment of $G$ such that each list is of size at least $\Delta^2(1+\gamma\Delta^{-\frac{1}{3}})$.
 Let $x$ be an edge or a vertex of $G$.
 
 Let $F$ be the set of colorings of $G$  respecting $L$ that are not-total Thue coloring and
 that induce a total Thue coloring of $G\setminus\{x\}$. We then have
 \begin{equation}\label{maineqtotalnonrep}
  |C_L(G)|= \Delta^2(1+\gamma\Delta^{-\frac{1}{3}}) |C_L(G\setminus\{x\})|- |F|
 \end{equation}
We need to upper-bound the size of $F$.
Let $M_i$ be the set of colorings from $F$ that contains a mixed-path of length $2i$ inducing a square.
If $x$ is an edge (resp. a vertex) let $S_i$ be the set 
of colorings from $F$ that contain an edge-path (resp. a vertex-path) of length $2i$ inducing a square.
Clearly $|F|\le\sum_{i\ge1} (|M_i|+|S_i|)$.

For any coloring $c$ from $M_i$ there is mixed-path $p$ of length $2i$ such that 
\begin{itemize}
 \item $p$ induces a square in $c$,
 \item $p$ contains $x$ and we can call $p'$ the half of $p$ that contains $x$,
 \item the coloring induced over $G\setminus p'$ is non-repetitive,
 \item $p$ and the coloring induced over $G\setminus p'$ uniquely determines $c$ 
 (since the second half of the square is identical to the first half).
\end{itemize}

Given $p$ and $p'$ the number of such coloring from $M_i$ is at most
$|C_L(G\setminus p')|$, but since $p'$ contains $x$ and $n-1$ other elements our induction hypothesis implies that this quantity is bounded by
$$|C_L(G\setminus p')|\le \frac{|C_L(G\setminus \{x\})|}{(\Delta^2(1+2^{\frac{2}{3}}\Delta^{-\frac{1}{3}}))^{i-1}}\,.$$
If $x$ is a vertex then there are at most $i\Delta^i$ mixed-paths of length $2i$ going through $x$.
If $x$ is an edge then there are at most $2i\Delta^{i-1}\le i\Delta^i$ mixed-paths of length $2i$ going through $x$.
We deduce
$$|M_i|\le i\Delta^{i} \frac{|C_L(G\setminus \{x\})|}{(\Delta^2(1+2^{\frac{2}{3}}\Delta^{-\frac{1}{3}}))^{i-1}}= i\Delta \frac{|C_L(G\setminus \{x\})|}{(\Delta(1+2^{\frac{2}{3}}\Delta^{-\frac{1}{3}}))^{i-1}}$$

If an edge-path (resp. a vertex path) induces a square we can also recover the coloring of the full path
by knowing only the first half of it. Moreover, there are at most 
$2i\Delta^{2i-1}$ edge-paths of length $2i$ going through a given edge and at most
$i\Delta^{2i-1}\le2i\Delta^{2i-1}$ vertex-paths of length $2i$ going through a given vertex.
Thus following the same idea we can bound the size of $S_i$ by:

$$|S_i|\le 2i\Delta^{2i-1} \frac{|C_L(G\setminus \{x\})|}{(\Delta^2(1+2^{\frac{2}{3}}\Delta^{-\frac{1}{3}}))^{i-1}}= 2i\Delta \frac{|C_L(G\setminus \{x\})|}{(1+2^{\frac{2}{3}}\Delta^{-\frac{1}{3}})^{i-1}}$$
Thus we can upper-bound $|F|$
\begin{align*}
 |F|&\le\Delta |C_L(G\setminus \{x\})|\sum_{i\ge1}  \frac{2i}{(1+2^{\frac{2}{3}}\Delta^{-\frac{1}{3}})^{i-1}}
 + \frac{i}{(\Delta(1+2^{\frac{2}{3}}\Delta^{-\frac{1}{3}}))^{i-1}}\\
 |F|&\le\Delta |C_L(G\setminus \{x\})|\left(1+\left(2+\frac{1}{\Delta}\right)\sum_{i\ge1}  \frac{i}{(1+2^{\frac{2}{3}}\Delta^{-\frac{1}{3}})^{i-1}}\right)\\
 |F|&\le\Delta |C_L(G\setminus \{x\})|\left(1+\left(2+\frac{1}{\Delta}\right)\frac{(1+2^{\frac{2}{3}}\Delta^{-\frac{1}{3}})^2}{(2^{\frac{2}{3}}\Delta^{-\frac{1}{3}})^2}\right)\\
 |F|&\le\Delta^{\frac{5}{3}} |C_L(G\setminus \{x\})| \left(\Delta^{-\frac{2}{3}}+\left(2+\frac{1}{\Delta}\right)
\left(2^{-\frac{2}{3}}+\Delta^{-\frac{1}{3}}\right)^2\right)
 \end{align*}
 Since $\Delta>1$ (and $\Delta ^{-\frac{1}{3}}> \Delta ^{-\frac{2}{3}}$) we finally get
$$
 |F|\le\Delta^{\frac{5}{3}} |C_L(G\setminus \{x\})| \left(
 2^{-\frac{1}{3}}+8\Delta^{-\frac{1}{3}}
 \right)\,.$$
Together with equation \eqref{maineqtotalnonrep}, it implies
\begin{align*}
|C_L(G)|&\ge \Delta^2(1+\gamma \Delta^{-\frac{1}{3}})|C_L(G\setminus\{x\})|-\Delta^{\frac{5}{3}} |C_L(G\setminus \{x\})| \left(
 2^{-\frac{1}{3}}+8\Delta^{-\frac{1}{3}}
 \right)\\
|C_L(G)|&\ge \Delta^2|C_L(G\setminus\{x\})|\left(1+\Delta^{-\frac{1}{3}}\left(\gamma-\left(
 2^{-\frac{1}{3}}+8\Delta^{-\frac{1}{3}}
 \right)\right)\right)
\end{align*}
Substituting $\gamma =\frac{3}{2^{\frac{1}{3}}}+8\Delta^{-\frac{1}{3}}$, we finally get
$$|C_L(G)|\ge  \Delta^2\left(1+2^{\frac{2}{3}}\Delta^{-\frac{1}{3}}\right)|C_L(G\setminus\{v\})|$$
which concludes this proof.
\end{proof}
This upper-bound also holds for Thue coloring and edge Thue-coloring which provides the following Corollary.
\begin{corollary}
 For every graph $G$ with maximum degree $\Delta$, we have
 $$\pi'_{\operatorname{ch}}(G)\le
 \Delta^2+\frac{3}{2^\frac{1}{3}}\Delta^{\frac{5}{3}}+8\Delta^{\frac{4}{3}}+1\,.$$ 
\end{corollary}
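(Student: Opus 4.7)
The plan is to derive this corollary directly from Theorem~\ref{totalthuecoloringthm}, using nothing more than the definition of a total Thue coloring. There is no need to redo any induction; the previous theorem will be invoked as a black box. The key observation is that a total Thue coloring, by definition, requires the sequence of edge-colors on every path to be non-repetitive, so restricting such a coloring to the edges yields a non-repetitive edge coloring.

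Concretely, I would proceed as follows. Fix a graph $G$ of maximum degree $\Delta$ and an arbitrary list assignment $L_E$ on the edges of $G$ in which every list has size at least $\Delta^2+\frac{3}{2^{\frac{1}{3}}}\Delta^{\frac{5}{3}}+8\Delta^{\frac{4}{3}}+1$. Next I would extend $L_E$ to a list assignment $L$ on both the edges \emph{and} the vertices of $G$ by assigning to each vertex an arbitrary list of the same size (for instance, a fixed palette of that many colors, chosen disjoint from the edge colors for clarity). Then I would apply Theorem~\ref{totalthuecoloringthm} to obtain a total Thue coloring $c$ of $G$ that respects $L$. Finally, since $c$ is a total Thue coloring, the edge-color sequence of every path of $G$ is non-repetitive, so the restriction of $c$ to the edges of $G$ is a non-repetitive edge coloring respecting $L_E$. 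Because $L_E$ was arbitrary, this shows the desired upper bound on $\pi'_{\operatorname{ch}}(G)$.

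There is no real obstacle here: the argument is a one-line reduction, with the only point worth stating explicitly being the definitional implication ``total Thue coloring $\Rightarrow$ edge-non-repetitive coloring'' (and symmetrically ``$\Rightarrow$ vertex-non-repetitive coloring'', which would give the same bound for $\pi_{\operatorname{ch}}(G)$). This is exactly the content of the sentence \emph{``This upper-bound also holds for Thue coloring and edge Thue-coloring''} that precedes the corollary.
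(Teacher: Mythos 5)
Your reduction is correct and matches the paper's intent: the paper's entire ``proof'' of this corollary is the one-sentence remark that the bound carries over to edge Thue coloring, which is precisely the observation (total Thue coloring restricted to edges is a non-repetitive edge coloring) that you make explicit. Your step of extending an arbitrary edge list assignment to the vertices before invoking Theorem~\ref{totalthuecoloringthm} is exactly what is needed, and nothing more.
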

This bound is better than the upper-bound
$\pi'_l(G)\le \Delta^2+2^{\frac{4}{3}}\Delta^{\frac{5}{3}}+O(\Delta^{\frac{4}{3}})$ given in 
\cite{MontassierEntropie} ($2^{\frac{4}{3}}\approx2.52...$ and $\frac{3}{2^\frac{1}{3}}\approx2.38...$).
Once again a more detailed analysis in their argument certainly provides the same bound.

Remark that one easily improves the coefficient of $\Delta^\frac{4}{3}$  with a more detailed analysis (at least as low as $1+2^{4/3}$ in the case of total coloring and $2^{4/3}$ in the case of edge coloring).

\section{Conclusion}
As already stated multiple times, most result in this paper can be obtained with entropy-compression arguments and it seems to be the case that these two approaches are in fact equivalent. 
However, our approach is much simpler to use and in particular it is not clear how to adapt the proof of Theorem \ref{weaktotalthuecoloringthmimproved} to the entropy-compression method.
The approach can obviously be generalized outside of the scope of non-repetitive coloring and we can for instance provide simpler proofs of all the result from \cite{ESPERET20131019}.

Our proof technique is also strongly related to the Lov\'asz Local Lemma.
We show lemmas of the form ``with $\gamma$ colors, coloring a new vertex multiply the number of valid coloring by $\alpha$'' and the second part of this statement can be replaced by ``when adding a new vertex the probability for a random coloring to be valid is at least multiplied by $\frac{\alpha}{\gamma}$'' which is the idea behind LLL (remark that $\frac{\alpha}{\gamma}$ is a quantity smaller than $1$). 
Following the same idea, this is in fact possible to show results equivalent to the SAT versions of LLL, by simply rewriting the standard inductive proof without using probability. However, such proof is really not informative since it simply follows the proof of LLL.

In this application of our approach that is similar to LLL, we need to show that the number of solutions does not decrease too fast every time that we add a constraint. In the applications to graph colorings, we instead add the colored elements one by one and we show that at each step the number of solutions grows fast enough. That is, the main difference is that in the first case we add the constraints one by one and on the second case it is better to add them  several at a time using the underlying structure of the problem.  

\section*{Acknowledgement}
I wish to thank Gwena\"el Joret and William Lochet for comments on earlier drafts. 
I also wish to thank Lucile without whom the covid-19 lockdown would have been much more unpleasant and less productive.

\bibliographystyle{plain}
\bibliography{biblio}
\end{document}